\definecolor{orchid}{rgb}{0.85, 0.44, 0.84}
\begin{document}

\begin{frontmatter}

\titledata{On $2$-bisections and monochromatic edges in claw-free cubic multigraphs}{}

\authordata{Federico Romaniello}
{Dipartimento di Matematica, Informatica ed Economia\\ Universit\`{a} degli Studi della Basilicata, Potenza, Italy}
{federico.romaniello@unibas.it}{}

\keywords{$2$-bisections; claw–free graphs; cubic graphs, multigraphs.}
\msc{05C15, 05C70.}

\begin{abstract}
A $k$-bisection of a multigraph $G$ is a partition of its vertex set into two parts of the same cardinality such that every connected component induced by the vertices in each part has at most $k$ vertices. Cui and Liu shown that every claw-free cubic multigraph contains a $2$-bisection, while Eom and Ozeki constructed specific $2$-bisections with bounded number of monochromatic edges. Their bound is the best possible for claw-free cubic simple graphs. In this note, we extend the latter result to the larger family of claw-free cubic multigraphs. 
\end{abstract}
\end{frontmatter}
\section{Introduction}
In this short note, we will mostly use multigraphs, i.e. graphs that may contain multiple edges but no loops are allowed. A graph that does not contain multiple edges is said to be \textit{simple}. A \textit{bisection} $(B,W)$ of a (multi)graph $G$ is a partition of its vertex set $V(G)$ in two sets, say $B$ and $W$, such that $|B|=|W|$. A $k$-bisection of $G$ is a bisection $(B,W)$ such that every component of $G[B]$ and $G[W]$ has at most $k$ vertices, where $G[X]$ denotes the subgraph of $G$ induced by the subset $X \subseteq V(G)$. A \textit{monochromatic edge} is an edge connecting two vertices of the same part of a given bisection. A \textit{claw} is the complete bipartite graph $K_{1,3}$, and the vertex of degree three in such a claw is said to be its \textit{centre}. If a (multi)graph does not contain induced claws it is said to be \textit{claw-free}. For basic notation on (multi)graphs not defined here, we refer the reader to \cite{BM}.
Bisections of cubic simple graphs and multigraphs have already been considered by many researchers. The following is an important open conjecture, due to Ban and Linial \cite{banlinial}:
\begin{conjecture}[\cite{banlinial}]
Every bridgeless cubic simple graph $G$ admits a $2$-bisection, unless $G$ is the Petersen graph.
\end{conjecture}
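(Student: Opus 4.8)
The plan is to recast a $2$-bisection in the language of matchings and then try to build one from a $2$-factor; since this is the central open problem of the area, what follows is a line of attack together with an honest account of where it stalls. First I would observe that in a cubic graph a bipartition $(B,W)$ is a $2$-bisection precisely when the set of \emph{monochromatic} edges forms a matching of $G$. Indeed, if some vertex $v$ were incident with two monochromatic edges then its monochromatic component would already contain at least three vertices, while conversely a matching of monochromatic edges forces every monochromatic component to be a single vertex or a single edge. So the task becomes: find an equitable $2$-coloring of $V(G)$ whose monochromatic edges form a matching.

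Next I would exploit Petersen's theorem: every bridgeless cubic graph has a perfect matching $M$, and $F := G - M$ is a $2$-factor, i.e.\ a disjoint union of cycles $C_1,\dots,C_t$. The idea is to color each cycle as alternately as possible. If every $C_i$ is even, an alternating coloring makes every edge of $F$ bichromatic, so the only possible monochromatic edges lie in $M$; since $M$ is itself a matching we are done, and balance is automatic because each even cycle contributes equally to $B$ and to $W$. In particular this already settles every $3$-edge-colorable cubic graph, since choosing $M$ to be one color class leaves the other two classes as a $2$-factor consisting entirely of even cycles.

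The difficulty is therefore concentrated on the graphs admitting no $2$-factor with all cycles even, that is, the graphs of chromatic index $4$ (the snarks), where every $2$-factor contains odd cycles. On an odd cycle the best one can achieve is a near-alternating coloring with a single monochromatic ``defect'' edge. The plan would be to (i) pair up the odd cycles, which is possible because their number is even (the cycle lengths sum to the even number $|V(G)|$), and orient the defect imbalance $+1/-1$ within each pair so as to preserve $|B|=|W|$; and (ii) position each defect edge so that the two matching edges leaving its endpoints are bichromatic, thereby keeping the monochromatic edges a matching.

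The main obstacle is step (ii): the edges of $M$ link the odd cycles to the rest of $F$, so fixing a defect on one cycle propagates color constraints along $M$ into neighboring cycles, and these constraints can chain together globally. No purely local flipping argument can close the gap; indeed the Petersen graph is exactly the configuration in which every attempt to discharge the odd-cycle defects along $M$ eventually contradicts itself, which is why it must be excluded. A complete proof would require a genuinely global mechanism, for instance a parity- or flow-based accounting carried out over \emph{all} choices of perfect matching, or a structural reduction using bridgelessness to reroute the constraints, that certifies the Petersen graph as the \emph{only} obstruction. Supplying and verifying such a mechanism is the crux, and is precisely what keeps the statement at the level of a conjecture.
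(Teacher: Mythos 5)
This statement is the Ban--Linial conjecture, which the paper explicitly presents as an \emph{open} conjecture: it offers no proof, only the citation to Ban and Linial and the remark that the claw-free bridgeless case was settled in the literature. So there is no proof in the paper against which to compare your attempt, and your attempt is, as you yourself say, not a proof. Your preliminary reductions are sound: in a simple cubic graph a balanced bipartition is a $2$-bisection exactly when the monochromatic edges form a matching; Petersen's theorem gives a perfect matching $M$ with complementary $2$-factor $F$; if $F$ has only even cycles the alternating coloring works (so every $3$-edge-colorable bridgeless cubic graph, hence every non-snark, admits a $2$-bisection); and the number of odd cycles in any $2$-factor is even. These observations are all correct and are indeed the standard opening moves on this problem.

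The genuine gap is precisely your step (ii). Placing one defect edge on each odd cycle of $F$ keeps the colouring balanced, but nothing guarantees that the two $M$-edges leaving the endpoints of a defect can simultaneously be made bichromatic: the parity of the colouring along each cycle is rigid once a single vertex is coloured, and the $M$-edges couple these parities globally across all cycles. This is not a local obstruction one can flip away, and the Petersen graph witnesses that the obstruction is real (it admits no $2$-bisection at all). No known argument closes this case in general, which is why the statement remains a conjecture; your write-up is an honest map of where the difficulty sits, but it should not be mistaken for, or spliced in as, a proof.
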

This conjecture has been proved true for bridgeless claw-free cubic simple graphs in \cite{clawfree}; however, in \cite{cuiliu} a stronger result has been proved by Cui and Liu:
\begin{theorem}[\cite{cuiliu}]
Every claw-free cubic multigraph contains a $2$-bisection.
\end{theorem}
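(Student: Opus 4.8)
The plan is to combine a local structural analysis of claw-free cubic multigraphs with a global colouring argument, and since a cubic multigraph has an even number of vertices in each component, I may treat each connected component separately and ask only for a balanced $2$-bisection of a connected $G$. The structural input comes from a single elementary observation: if $v$ has three incident edge-ends, then either two of them join $v$ to one vertex (a multiple edge) or $v$ has three distinct neighbours $a,b,c$, and because $G[\{v,a,b,c\}]$ is not an induced $K_{1,3}$, at least one of $ab,ac,bc$ is present, putting $v$ in a triangle. Hence \emph{every vertex lies in a triangle or is incident with a multiple edge}. Using this I would partition $V(G)$ into a short list of \emph{blocks}: triangles (three external edges), diamonds $K_4-e$ obtained by merging two edge-sharing triangles (two external edges), the whole-component blocks $K_4$ and the triple edge, and the genuinely multigraph gadgets built around a double edge (a double edge carrying two pendant edges, a triangle with one doubled edge, and so on). Contracting each block to a node produces a reduced multigraph $H$ recording how the blocks attach to one another.

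Next I would colour each block so that the monochromatic pieces it creates have order at most $2$; equivalently, each colour class must induce only copies of $K_1$ and single (multi-)edges. Inside a triangle this forces a $2$-$1$ split; inside a diamond the natural choice gives the two degree-$3$ vertices one colour, where they form a single monochromatic edge, and the two degree-$2$ vertices the other colour, so that each carries its monochromatic relation outward through its unique external edge; the whole-component blocks $K_4$ and the triple edge are coloured in a balanced $2$-$2$ or $1$-$1$ fashion outright. The decisive point is that a monochromatic component can cross a block boundary only along an external edge of $H$, so the entire difficulty reduces to choosing the block colourings so that no external edge joins two already-saturated monochromatic pieces. I would also record the parity bookkeeping: every block except the triangle and the triangle-with-a-doubled-edge has even order and is coloured evenly, so the imbalance $|B|-|W|$ is a sum of contributions $\pm 1$, one from each odd (order-$3$) block, and since the component has even order the number of such odd blocks is even.

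Finally I would drive the proof by an induction on $|V(G)|$ applied to a minimal counterexample, peeling off a block with as few external attachments as possible: a whole-component block is removed outright, and a diamond or double-edge gadget (two external edges, to $x$ and $y$) is deleted and replaced by the edge $xy$, yielding a strictly smaller claw-free cubic multigraph to which minimality applies; the reinstated block is then recoloured to agree at the interface with the colours already on $x$ and $y$. \textbf{The main obstacle is the coupling of the two requirements} — keeping every monochromatic component of order at most $2$ across the interface while forcing the global balance $|B|=|W|$ — since flipping an odd block's majority colour to cancel a $\pm 1$ also changes the colours that block presents to its neighbours. Two further difficulties feed into this same step: the triangle blocks, which are exactly the ones producing imbalance, are also the hardest to delete while preserving claw-freeness (naive contraction or re-linking of $x,y,z$ can create a new claw), and the multigraph gadgets absent from the simple-graph case must each be checked by hand. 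I would resolve the balance by exploiting the remaining freedom in the $2$-$1$ splits and pairing the even number of odd blocks through the reduced graph $H$, flipping them in interface-compatible pairs so that $|B|-|W|$ is brought to $0$; verifying that this freedom always suffices, together with the gadget case analysis, is the genuinely new content beyond the claw-free \emph{simple} result.
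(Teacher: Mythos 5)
This statement is quoted by the paper from Cui and Liu; the paper contains no proof of it, so there is nothing internal to compare against line by line. Judged on its own terms, your proposal is a plausible plan that matches the known strategy in outline (the structural fact that every vertex of a claw-free cubic multigraph lies in a triangle, a trumpet, or a digon; a block decomposition into diamonds, triangles, digon gadgets and whole-component blocks; surgery that deletes a two-attachment block and replaces it by the edge $xy$ -- the same operation the paper uses in Section 3, including the verification that it preserves claw-freeness). But it is a plan, not a proof, and you say so yourself: the step where you ``verify that this freedom always suffices'' is exactly the content of the theorem.

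Concretely, two things are missing. First, the reinsertion step is only easy for even-order blocks: for a diamond with attachments $x,y$ one can check by hand that every colouring of $x,y$ inherited from $G'$ admits a balanced $2$--$2$ colouring of $\{a,b,c,d\}$ keeping all monochromatic components of order at most two; but the triangle blocks are odd, they are the sole source of imbalance, and you give no mechanism for removing one (deleting a triangle from a cubic multigraph and reconnecting its three external edges does not obviously yield a smaller cubic claw-free multigraph, as you yourself note). Your proposed repair -- pairing the even number of odd blocks through the reduced graph $H$ and flipping them ``in interface-compatible pairs'' -- is asserted without any argument that a flip propagated along a path in $H$ can always be absorbed at each intermediate block without creating a monochromatic component of order three. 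Second, the induction has no identified base cases and no argument that the minimal counterexample cannot be one of the small multigraph gadgets you list; those must be checked exhaustively. Until the odd-block balancing and the gadget analysis are actually carried out, the proof is not complete.
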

Moreover, in \cite{emt} a more general conjecture has been presented and suggests a $2$-bisection in every cubic simple graph which admits a perfect matching, except for the Petersen graph. Furthermore, the same paper highlights a significant connection between bisections and nowhere-zero flows, due to a result by Jaeger \cite{Jae} showing that a cubic simple graph with a circular nowhere-zero $r$-flow contains an $(\left\lfloor r \right\rfloor - 2)$-bisection.

It is worth noting that a (multi)graph contains a $1$-bisection if and only if it is bipartite. To some extent, $2$-bisections of a (multi)graph can be seen as an index of how close it is to being bipartite, as its monochromatic components are either isolated vertices or independent edges. By considering this, Eom and Ozeki directed their attention to the count of monochromatic edges in $2$-bisections of a claw-free cubic simple graph $G$ \cite{kenta}. The idea is that a smaller number of monochromatic edges in a $2$-bisection indicates a closer proximity of $G$ to be bipartite. It is worth remarking that their bounds are the best for all claw-free cubic simple graphs \cite[Section 1]{kenta}. Our contribution is to refine the results presented in \cite{kenta} by extending them to the case of claw-free cubic multigraphs (see Theorem \ref{mainthm}).\\
\section{Preliminaries}
We will deal solely with cubic multigraphs, i.e. $3$-regular multigraphs. Given a $2$-bisection $(B,W)$, we denote by $\varepsilon(B,W)$ the number of its monochromatic edges. Some induced subgraphs, shown in Figure \ref{sub}, will play an important role in our proofs:
\begin{itemize}
    \item  a \textit{digon} is a subgraph on two vertices consisting of a pair of parallel edges, i.e. a cycle of length two;
    \item a \textit{triangle} is a subgraph consisting of a cycle of length three;
    \item the Shannon multigraph $Sh(3)$, i.e. a particular triangle with a digon on one of its sides. Due to its characteristic shape we will refer to $Sh(3)$ as a \textit{trumpet};
    \item a \textit{diamond} is a subgraph isomorphic to the complete graph on four vertices with an edge deleted, i.e. $K_4 - e$.
\end{itemize} 
\begin{figure}[h!]
    \centering
    \includegraphics[width=.6\textwidth]{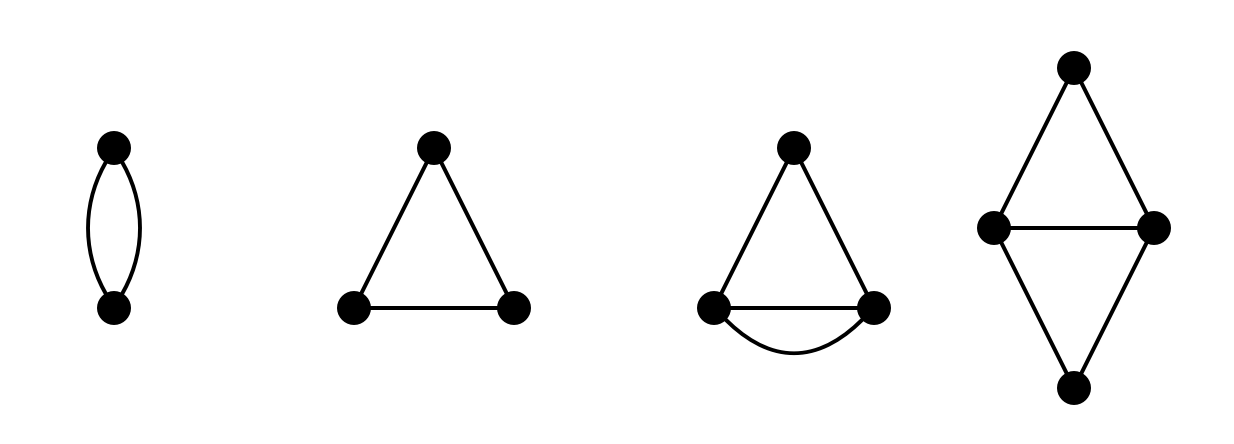} 
    \caption{From left to right: a digon, a triangle, a trumpet and a diamond.}
    \label{sub}
\end{figure}

It is easy to check that any two diamonds in a connected claw-free cubic multigraph $G$ are vertex-disjoint, unless $G \cong K_4$; hence, for cubic multigraphs, the claw-free condition is equivalent to requiring that every vertex should belong to a triangle, a trumpet or a digon, since a diamond is made of two triangles that share a side.\\
In particular, in any cubic simple graph or multigraph, the maximum number of triangles in which a vertex may lie is three, and this can only occur in a component isomorphic to $K_4$. On the other hand, a vertex lies in exactly two triangles if it is one of the vertices of degree three in a diamond, or one of the vertices of the multiple edges in a trumpet.

The following statement can be proved by a simple counting argument.
\begin{lemma}\label{epsbespw}
Let $(B,W)$ be a $2$-bisection of a cubic multigraph $G$ and let $\varepsilon_B$, $\varepsilon_W$ be the number of monochromatic edges in $G[B]$ and $G[W]$ respectively. Then $\varepsilon_B=\varepsilon_W$.
\end{lemma}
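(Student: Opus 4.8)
The plan is to obtain $\varepsilon_B=\varepsilon_W$ by double-counting edge-endpoints inside each part, exploiting the fact that $G$ is $3$-regular. First I would introduce the number $m$ of \emph{bichromatic} edges, that is, edges having exactly one endpoint in $B$ and the other in $W$. Every edge of $G$ that is incident to a vertex of $B$ is then either a monochromatic edge of $G[B]$ or one of these $m$ bichromatic edges, and I would classify them accordingly before summing degrees.

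The key step is the degree-sum identity. Summing the degrees of all vertices of $B$ counts each monochromatic edge of $G[B]$ exactly twice (both of its endpoints lie in $B$) and each bichromatic edge exactly once (only one endpoint lies in $B$). Since $G$ is cubic, the left-hand side equals $3|B|$, giving $3|B| = 2\varepsilon_B + m$. Running the identical argument on the part $W$ yields $3|W| = 2\varepsilon_W + m$, with the \emph{same} value of $m$, because a bichromatic edge is counted once from each side. Subtracting the two equations cancels $m$ and leaves $3(|B|-|W|) = 2(\varepsilon_B - \varepsilon_W)$; invoking the defining property $|B|=|W|$ of a bisection forces the right-hand side to vanish, hence $\varepsilon_B=\varepsilon_W$.

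There is no real obstacle here, which is why the lemma is stated as following from a simple counting argument. The only point that deserves a word of care is that $G$ is a multigraph: I would emphasize that degrees, the count $m$, and the monochromatic counts $\varepsilon_B,\varepsilon_W$ are all taken \emph{with multiplicity}, so that a pair of parallel edges contributes two to each relevant tally. With this convention the three displayed identities hold verbatim for multigraphs exactly as for simple graphs, and the cancellation argument goes through unchanged.
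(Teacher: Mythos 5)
Your proposal is correct, and it is precisely the ``simple counting argument'' the paper alludes to without writing out: the paper offers no further detail, and your degree-sum identities $3|B|=2\varepsilon_B+m$ and $3|W|=2\varepsilon_W+m$ together with $|B|=|W|$ are the intended argument. The remark about counting parallel edges with multiplicity is a sensible precaution and does not change anything.
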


In the proof of Theorem \ref{mainthm} we will construct a bisection with specific conditions, highlighted in the next definition:
\begin{definition}
Let $G$ be a connected claw-free cubic multigraph, different from $K_4$. A bisection $(B,W)$ of $G$ is said to be \textit{desired}, if the following conditions hold:
\begin{itemize}
    \item[\textbf{(DB1)}] exactly one edge in every triangle is monochromatic;
    \item[\textbf{(DB2)}] every monochromatic edge is contained in a triangle;
    \item[\textbf{(DB3)}] every diamond contains exactly one monochromatic edge;
    \item[\textbf{(DB4)}] for two vertices, say $u$, $v$, joining multiple edges, the edges between $u$ and $v$ are not monochromatic.
\end{itemize}
\end{definition}
It follows that, in a desired bisection, the multiple edge of a trumpet is not monochromatic.

Moreover, we will also avail of the next two results, that have been proved in \cite{kenta}.

\begin{lemma}[\cite{kenta}]\label{des2}
A desired bisection in a connected claw-free cubic multigraph is a $2$-bisection.
\end{lemma}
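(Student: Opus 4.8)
The plan is to reduce the statement to a single local claim: in a desired bisection $(B,W)$ no vertex is incident with two monochromatic edges. Once this is established, every monochromatic component of $G[B]$ and of $G[W]$ is a matching edge or an isolated vertex, hence has at most two vertices, which is precisely the definition of a $2$-bisection. So I would argue by contradiction, assuming that some vertex $v$ carries two monochromatic edges $e_1$ and $e_2$, and derive a contradiction with the defining conditions of a desired bisection.

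The first step is to pin down the local structure at $v$. Since by (DB4) a multiple edge is never monochromatic, each of $e_1,e_2$ is a single edge, so they reach two distinct neighbours $u_1\neq u_2$. Writing $x$ for the third neighbour of $v$, the same observation forces $x\notin\{u_1,u_2\}$ (otherwise $e_1$ or $e_2$ would be a parallel edge); thus $v$ has exactly three distinct neighbours $u_1,u_2,x$, all joined to $v$ by single edges. This is the step where the multigraph hypotheses must be handled with care, and it is what separates the present argument from the simple-graph version of \cite{kenta}.

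The core of the proof invokes (DB2): each of $e_1=vu_1$ and $e_2=vu_2$ must lie in a triangle, and a triangle on the edge $vu_i$ is determined by a common neighbour of $v$ and $u_i$, which can only be one of the remaining neighbours of $v$. Hence the triangle through $e_1$ has third vertex $w_1\in\{u_2,x\}$ and the one through $e_2$ has third vertex $w_2\in\{u_1,x\}$. I then split into two cases. If $w_1=u_2$ (equivalently $w_2=u_1$), then $\{v,u_1,u_2\}$ is a triangle containing both monochromatic edges $e_1,e_2$, contradicting (DB1). Otherwise $w_1=w_2=x$, so the triangles $\{v,u_1,x\}$ and $\{v,u_2,x\}$ share the edge $vx$ and the four vertices $\{v,u_1,u_2,x\}$ span a diamond; here one must rule out that $u_1u_2$ is an edge, but if it were these vertices would induce $K_4$, and since $G$ is connected this would force $G\approx K_4$, contrary to the standing hypothesis on desired bisections. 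This diamond again contains the two monochromatic edges $e_1,e_2$, contradicting (DB3). In every case we reach a contradiction, so no vertex is incident with two monochromatic edges, and the lemma follows.

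The main obstacle I anticipate is the bookkeeping around multiple edges. The conditions (DB4) and (DB2) must be deployed together to exclude the degenerate local configurations (parallel edges, and the trumpet in which a monochromatic edge of $v$ would point at a vertex already saturated by the double edge and hence sharing no triangle with $v$): (DB4) guarantees that the two monochromatic edges are single and reach distinct vertices, while (DB2) forbids a monochromatic edge lacking a triangle through $v$. Once these cases are cleared, the claw-free hypothesis — recorded in the excerpt as the fact that every vertex lies in a triangle, trumpet, or digon — is exactly what makes the triangle/diamond dichotomy above run uniformly.
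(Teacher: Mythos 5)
Your proposal is correct. Note, however, that the paper itself offers no proof of this lemma: it is imported verbatim from \cite{kenta} (where it is established for the bisections constructed there), so there is no in-paper argument to compare yours against. Your reduction is the natural one --- if no vertex meets two monochromatic edges, every monochromatic component is a single vertex or a single edge, which is exactly the $2$-bisection condition --- and the local case analysis is sound: \textbf{(DB4)} rules out parallel monochromatic edges and forces three distinct single-edge neighbours at $v$; \textbf{(DB2)} forces each of $vu_1$, $vu_2$ into a triangle whose third vertex is among the remaining neighbours of $v$; the two resulting configurations are killed by \textbf{(DB1)} (triangle $vu_1u_2$ with two monochromatic edges) and \textbf{(DB3)} (diamond on $\{v,u_1,u_2,x\}$ with two monochromatic edges), with the $K_4$ degeneracy excluded by the standing hypothesis $G\not\approx K_4$. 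The one point worth making explicit is that in your second case the subgraph on $\{v,u_1,u_2,x\}$ is an \emph{induced} $K_4-e$ with no extra parallel edges (this follows because $v$ and $x$ already have their full degree inside the set), since the paper's \textbf{(DB3)} is stated for induced diamonds; with that remark added, the argument is complete and correctly covers the multigraph cases (digons and trumpets) that do not arise in the simple-graph setting of \cite{kenta}.
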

\begin{theorem}[\cite{kenta}]\label{keven}
Let $G$ be a connected claw-free cubic multigraph that is not $K_4$. If the number $k$ of diamonds in $G$ is even, then $G$ admits a desired bisection.
\end{theorem}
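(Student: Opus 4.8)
The plan is to reduce the construction of a desired bisection to an orientation problem on a contracted multigraph, and to isolate the parity of $k$ as the point where the two colour classes are forced to balance. First I would contract each diamond, each digon, each trumpet, and each triangle not contained in a diamond to a single vertex, obtaining a connected multigraph $H$; the decomposition is legitimate because diamonds are vertex-disjoint and every vertex lies in exactly one such gadget. Each gadget has one \emph{port} (a vertex meeting the rest of $G$) per external edge, and in a cubic graph each port carries exactly one external edge, so orientations of $H$ correspond bijectively to $2$-colourings of the ports via ``port black $=$ external edge directed out of its gadget.'' Reading the conditions (DB1)--(DB4) through this dictionary, and using that an external edge lies in no triangle (again from the decomposition), I would check gadget by gadget: (DB2) forces every external edge and (DB4) every digon-edge to be bichromatic; a triangle must be coloured $2$--$1$, so its three ports are not all equal (out-degree in $\{1,2\}$); a digon has opposite ports (out-degree $1$); and a diamond is forced --- the one computation I would do with care --- to have its shared edge $ab$ monochromatic with $c,d$ equal, hence both ports equal, i.e. out-degree $0$ or $2$ (a source or a sink). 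A trumpet's single port is free. Conversely, any orientation of $H$ meeting these constraints reassembles into a colouring satisfying (DB1)--(DB4).

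Next I would locate the role of the hypothesis by computing $|B|-|W|$. Each triangle and each trumpet contributes $\pm1$ (a $2$--$1$ split), while each diamond and digon contributes $0$. Since $\sum_{v\in V(H)}(\mathrm{out}(v)-\mathrm{in}(v))=0$ and this sum equals the balance coming from triangles and trumpets plus $\sum_{\text{diamonds}}(\mathrm{out}-\mathrm{in})$, and each diamond contributes $\pm2$ to the latter, I obtain
\[
|B|-|W| \;=\; -2\bigl(\#\text{source-diamonds}-\#\text{sink-diamonds}\bigr).
\]
Thus $|B|=|W|$ forces exactly $k/2$ diamonds to be sources and $k/2$ to be sinks, which is possible only when $k$ is even --- precisely the hypothesis.

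For the construction I would first treat the bulk by an Eulerian augmentation: add a vertex $z$ joined to every odd-degree vertex of $H$ (the triangle- and trumpet-vertices, of which there is an even number since $|V(G)|$ is even), take an Eulerian orientation of the resulting even multigraph, and delete $z$. This automatically gives triangles out-degree in $\{1,2\}$, digons out-degree $1$, trumpets out-degree at most $1$, and makes the triangle/trumpet signs cancel, so their contribution to the balance is $0$; only the diamonds remain wrong (out-degree $1$ rather than $0$ or $2$). To repair them I would use that each diamond has exactly two ports, so the ``diamond adjacency graph'' has maximum degree $2$ and its components are paths or cycles; a pure diamond cycle would constitute all of $H$ (hence an even cycle, as $k$ is even), and otherwise every component is a path. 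Each component is properly $2$-colourable into sources and sinks, and because $k$ is even the global totals can be balanced to $k/2$ each. Finally I would install these choices by reversing, for suitably paired diamonds, directed paths between them, using that reversing a directed path alters only the imbalance of its two endpoints and leaves every triangle, trumpet, and already-fixed diamond untouched.

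The main obstacle is exactly this last routing: guaranteeing that the required directed paths between paired diamonds exist in the evolving orientation, and that the source/sink boundary conditions at the diamonds extend to a consistent orientation of the rest (a flow-feasibility condition that can genuinely fail, e.g. for two adjacent sinks). I expect to resolve it using the connectivity of $H$ together with the freedom in the source/sink assignment established above: choosing the pairing so that adjacent diamonds receive opposite roles, and exploiting that Eulerian orientations are strongly connected to supply the reversal paths. Verifying this feasibility is where the real care lies; once the orientation is in hand, reassembling it into the colouring and confirming (DB1)--(DB4) and $|B|=|W|$ is routine bookkeeping.
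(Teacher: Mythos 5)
First, a remark on scope: the paper does not prove this statement at all --- it is imported verbatim from Eom and Ozeki \cite{kenta} --- so there is no internal proof to compare against, and I can only assess your argument on its own terms. Your translation of \textbf{(DB1)}--\textbf{(DB4)} into an orientation problem on the contracted multigraph $H$ is correct and cleanly done: every port carries exactly one external edge, external edges lie in no triangle, and the gadget-by-gadget constraints (triangles of out-degree $1$ or $2$, digons of out-degree $1$, diamonds forced to be sources or sinks, trumpets free) all check out, as does the balance identity showing that $|B|=|W|$ forces exactly half the diamonds to be sources. That computation is a genuinely illuminating way to see where the parity of $k$ enters, and it even yields the converse implication for free.

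The gap is the final routing step, and it is not a technicality: it is where the entire content of the theorem lives, and the operation you describe can actually fail. Take $H$ with vertices $D,D'$ (diamonds), $T_1,T_2$ (triangles), $U_1,U_2$ (trumpets) and edges $DT_1$, $DT_2$, $D'T_1$, $D'T_2$, $T_1U_1$, $T_2U_2$; this lifts to a connected claw-free cubic multigraph on $20$ vertices with $k=2$. The orientation $D\to T_1$, $D'\to T_1$, $T_1\to U_1$, $T_2\to D$, $T_2\to D'$, $U_2\to T_2$ is the restriction of a legitimate Eulerian orientation of $H+z$ (take $T_1\to z$, $U_1\to z$, $z\to T_2$, $z\to U_2$), satisfies all the local constraints with both diamonds at out-degree $1$, yet admits no directed path from $D$ to $D'$ nor from $D'$ to $D$: each diamond can only reach the dead end $U_1$. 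So the single path reversal between the paired diamonds that your plan relies on does not exist for either choice of roles, even though the diamonds are non-adjacent and either assignment is admissible. The configuration is rescued by two separate reversals terminating in the trumpets $U_1$ and $U_2$, but that is a different and more delicate operation: a reversal path terminating at a triangle changes that triangle's out-degree by one and can push it to $0$ or $3$, destroying \textbf{(DB1)}, so one must prove that every diamond can reach by a directed path either its partner or a vertex with slack, and that the residual imbalances still cancel globally. None of this is in the proposal; until that feasibility statement is established, you have shown that the theorem is equivalent to a degree-constrained orientation problem, not that the required orientation exists.
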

\section{Main Theorem}\label{main}
As stated in the introduction, this section shall be devoted to stating and proving our main result. We would like to remark that this is a natural generalisation of Theorem $1.2$ and Proposition $1.3$ stated and proved in \cite[Section 1]{kenta}.
\begin{theorem}\label{mainthm}
 Let $G$ be a connected claw-free cubic multigraph, $G \not\cong K_4$; let $k$ be the number of diamonds, and let $p$ be the number of digons in $G$. Then $G$ contains a $2$-bisection $(\bar{B},\bar{W})$ such that:
 \begin{equation*}
\varepsilon(\bar{B},\bar{W}) = 
\left\{
\begin{alignedat}{2}
&\frac{|V(G)|-k-2p}{3}     &&,\text{ if $k$ is even}\\[6pt]
&\frac{|V(G)|-k-2p}{3}+1 \ &&,\text{ if $k$ is odd.}
\end{alignedat}
\right.
\end{equation*}

Moreover,  $\varepsilon(\bar{B},\bar{W}) \leq \varepsilon(B,W)$, for every $2$-bisection $(B,W)$ of $G$. 
\end{theorem}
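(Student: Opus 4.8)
The plan is to reduce the whole theorem to a single canonical decomposition of $V(G)$ into vertex-disjoint \emph{gadgets}. First I would show that, because $G$ is cubic, claw-free and $G\not\approx K_4$, every vertex lies in exactly one of the following pairwise vertex-disjoint induced configurations: a \emph{simple triangle} (a triangle with single sides, not contained in a diamond or trumpet), a trumpet, a diamond, or a digon (a double edge not belonging to a trumpet, which is what $p$ counts). The covering follows from the fact recalled in the excerpt that every vertex lies in a triangle, trumpet or digon; disjointness follows from $3$-regularity, which forces two distinct triangles either to share a full side (yielding a diamond) or to be disjoint, and which prevents a trumpet or digon vertex from lying in any outside triangle. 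Writing $t,s,k,p$ for the numbers of simple triangles, trumpets, diamonds and digons, this partition gives $|V(G)|=3t+3s+4k+2p$, hence $\frac{|V(G)|-k-2p}{3}=t+s+k$. A handshake count on the multigraph obtained by contracting each gadget to a point (with contracted degrees $3,1,2,2$ respectively) shows $3t+s+2k+2p$ is even, so $t+s$ is even and therefore $t+s+k\equiv k\pmod 2$; this congruence is what ultimately produces the two cases.

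The even case is then immediate. When $k$ is even, Theorem \ref{keven} supplies a desired bisection, which is a $2$-bisection by Lemma \ref{des2}. I would count its monochromatic edges gadget by gadget: each simple triangle and each trumpet contributes exactly one by \textbf{(DB1)} and \textbf{(DB4)}, each diamond contributes exactly one by \textbf{(DB3)}, and each digon contributes none; no link edge is monochromatic, since by \textbf{(DB2)} every monochromatic edge lies in a triangle while, by vertex-disjointness of the gadgets, a link edge lies in no triangle. Hence $\varepsilon(\bar B,\bar W)=t+s+k=\frac{|V(G)|-k-2p}{3}$.

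For optimality I would establish a matching lower bound for every $2$-bisection $(B,W)$. Each simple triangle, trumpet and diamond contains a $3$-cycle, hence at least one internal monochromatic edge (a trumpet has one even when its double edge is bichromatic, two when it is monochromatic), and a digon contributes at least zero; since the gadgets are vertex-disjoint these internal monochromatic edges are distinct, so $\varepsilon(B,W)\ge t+s+k$. By Lemma \ref{epsbespw}, $\varepsilon(B,W)=\varepsilon_B+\varepsilon_W=2\varepsilon_B$ is even. If $k$ is even then $t+s+k$ is even and this bound is already tight; if $k$ is odd then $t+s+k$ is odd, so the even integer $\varepsilon(B,W)$ must in fact be at least $t+s+k+1$. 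This proves both the optimality clause and the sharpness of the claimed value in each parity.

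It remains to construct, when $k$ is odd, a $2$-bisection attaining $t+s+k+1=\frac{|V(G)|-k-2p}{3}+1$. My plan is a local surgery reducing to the even case: choose a diamond $D=\{a,b,c,d\}$ with degree-two vertices $c,d$ and external edges $cx,dy$, and replace $D$ by a digon $\{g,h\}$ with edges $gx,hy$, producing $G'$. Claw-freeness forces $x\neq y$ (otherwise $x$ would be the centre of an induced claw), so $G'$ is again a connected claw-free cubic multigraph, not $K_4$, with one fewer diamond (hence an even number) and all other gadget counts unchanged. Applying Theorem \ref{keven} to $G'$, the digon is bichromatic and, by \textbf{(DB2)} and \textbf{(DB4)}, its external neighbours carry the colours opposite to $g$ and $h$; re-expanding $D$, colouring $c,d$ opposite to $x,y$ and picking opposite colours for $a,b$ to restore $|\bar B|=|\bar W|$, yields exactly two monochromatic edges inside $D$ and no new monochromatic link edge, so the total becomes $(t+s+(k-1))+2=t+s+k+1$; the two new monochromatic edges form two size-two components, preserving the $2$-bisection property. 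I expect the genuine obstacle to be exactly this surgery: checking that the replacement preserves the claw-free cubic structure and the gadget counts in every incidence pattern of $x,y$, and that re-expansion adds precisely one extra monochromatic edge and no more, that is, the careful bookkeeping rather than any isolated hard idea.
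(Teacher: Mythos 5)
Your proposal is correct and follows the same overall architecture as the paper: the gadget partition into triangles, trumpets, diamonds and digons, Theorem \ref{keven} plus Lemma \ref{des2} for the even case, the counting lower bound combined with the parity forced by Lemma \ref{epsbespw} for optimality, and a local diamond surgery reducing the odd case to the even one. The one genuine difference is the surgery itself: the paper deletes the diamond and joins $x$ to $y$ by a \emph{single new edge}, which forces a nontrivial case analysis (the new edge may already be present in $G$, may be a multiple edge, may become the parallel edge of a trumpet, or could a priori create a triangle or turn $G'$ into $K_4$ or the two-vertex triple edge), whereas you replace the diamond by a fresh digon $\{g,h\}$. Your variant buys a real simplification: the digon cannot lie in any new triangle or diamond unless $x=y$ (excluded by claw-freeness), $G'$ automatically differs from $K_4$, and \textbf{(DB2)}/\textbf{(DB4)} immediately force $g,h,x,y$ into the alternating pattern you need, so the re-expansion adding exactly two monochromatic edges in two size-two components goes through uniformly. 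Two small points to tidy up: the digon count of $G'$ is $p+1$, not ``unchanged'' (harmless, since your bookkeeping is in terms of $t+s+k$ and digons contribute no monochromatic edges, but the sentence as written is inaccurate); and your handshake argument that $t+s$ is even is fine but can be replaced by the paper's shorter observation that $|V(G)|=3(t+s)+4k+2p$ is even.
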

\begin{proof}
Let $G$ be a connected claw-free cubic multigraph, different from $K_4$. We start by determining the minimum number of monochromatic edges for any $2$-bisection of $G$.\\
Let $(B,W)$ be a $2$-bisection of $G$. $V(G)$ can be partitioned into vertex-disjoint diamonds, triangles, trumpets, and digons, as $G$ is claw-free and different from $K_4$. Moreover, each triangle of $G$ contains at least one monochromatic edge, and it is easy to check that:
\begin{equation*}
    \varepsilon(B,W) \geq k + \frac{|V(G)|-4k-2p}{3}= \frac{|V(G)|-k-2p}{3}. 
\end{equation*}
This is the minimum number of monochromatic edges when the number of diamonds $k$ is even.\\
Suppose now the number of diamonds $k$ is odd and $\varepsilon(B,W)= \frac{|V(G)|-k-2p}{3}$. By Lemma \ref{epsbespw}, $\varepsilon(B,W)=\varepsilon_B +\varepsilon_W=2\varepsilon_B=2\varepsilon_W$ is an even integer. This is not possible as it contradicts the fact that $|V(G)|$ is even and the assumption that $k$ is odd. Hence, it must be that $\varepsilon(B,W)> \frac{|V(G)|-k-2p}{3}$. Moreover, from the decomposition of $V(G)$ into vertex-disjoint diamonds, triangles, trumpets, and digons, it follows that $\frac{|V(G)|-k-2p}{3}$ must be an integer. In conclusion, the minimum number of monochromatic edges satisfies $\varepsilon(B,W)\geq \frac{|V(G)|-k-2p}{3}+1$, when $k$ is odd.\\
We now construct a specific $2$-bisection with the sought number of monochromatic edges. We shall distinguish between two cases, whether $k$ is even or odd.\\

\noindent\textbf{Case $k$ even:} Lemma \ref{des2} and Theorem \ref{keven} guarantee that $G$ contains a desired $2$-bisection, say $(\bar{B},\bar{W})$. Conditions \textbf{(DB1)}, \textbf{(DB2)}, \textbf{(DB3)}, and \textbf{(DB4)}, and our assumptions, imply that $V(G)$ can be partitioned into vertex-disjoint diamonds, triangles, trumpets, and digons. We have that:
\begin{equation}
    \varepsilon(\bar{B},\bar{W})= k + \frac{|V(G)|-4k-2p}{3}= \frac{|V(G)|-k-2p}{3},
\end{equation}
where, in the first equality, the term $k$ corresponds to an edge in each diamond, while the second term accounts for an edge in each triangle or trumpet. Hence the thesis.\\

\noindent\textbf{Case $k$ odd:} Since $k$ is odd, $G$ contains at least one diamond. Let $a,b,c,d$ be the four vertices with $ad \not\in E(G)$ and let $x$ and $y$ be the neighbours of $a$ and $d$ not in the diamond, respectively. Note that $x\neq y$, otherwise $G$ would contain a claw. It follows that $\left\{xa,dy\right\}$ is an edge-cut of $G$.  Let $G^{\prime}$ be the multigraph obtained from $G$ by deleting the four vertices $a,b,c,d$ and adding a new edge that connects $x$ and $y$, see Figure \ref{odddm}.\\
We will now show some structural properties of $G^{\prime}$. It is easy to check that $G^{\prime}$ is a connected and cubic multigraph. Suppose that $G^{\prime}$ contains an induced claw. Because $G$ is claw-free, the only possibility for $G^{\prime}$ to contain a claw is by using the new edge $xy$, and this implies that the centre of the claw is either $x$ or $y$. By symmetry, without loss of generality, we may assume $x$. Then, by using $a$ instead of $y$ we would obtain an induced claw in $G$, which is a contradiction. Hence, $G^{\prime}$ is a connected, cubic, claw-free multigraph.\\
We will show now that the new edge $xy \in V(G^{\prime})$ is not contained in any diamonds of $G^{\prime}$. Suppose firstly that the new edge $xy \in E(G)$, i.e. it was already an edge of $G$. It could be that $xy$ was a multiedge in $G$. In this case $G^{\prime}$ must be the cubic multigraph on two vertices and three multiedges, because of the claw-freeness of $G$ and $G^{\prime}$, see Figure \ref{fig:trivial}. Suppose now that $xy \in E(G)$ is a simple edge, and let $x^{\prime}$ and $y^{\prime}$ be the third neighbours of $x$ and $y$, respectively, other than $a$ and $d$. If $x^{\prime} \neq y^{\prime}$, $G$ would have a claw, which is a contradiction. Hence $x^{\prime} = y^{\prime} =w$. The claw-freeness of $G$ and $G^{\prime}$ implies that $xy$ is the parallel edge in the trumpet $wxy$ in $G^{\prime}$, see Figure \ref{fig:mtrump}.
\begin{figure}
\centering
\begin{subfigure}{0.5\textwidth}
  \centering
  \includegraphics[width=.8\linewidth]{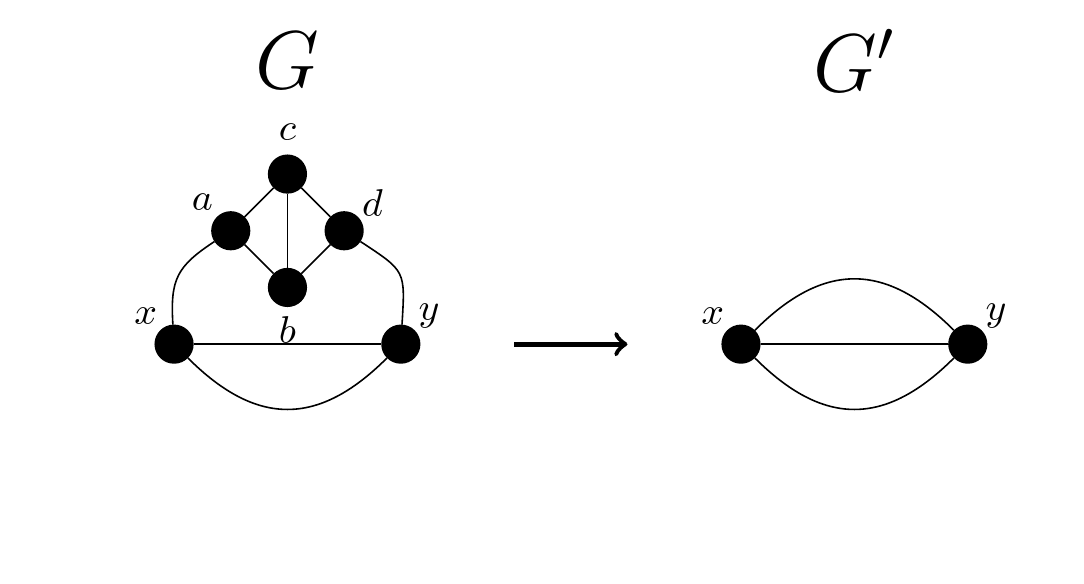}
  \caption{~}
  \label{fig:trivial}
\end{subfigure}%
\begin{subfigure}{0.5\textwidth}
  \centering
  \includegraphics[width=.8\linewidth]{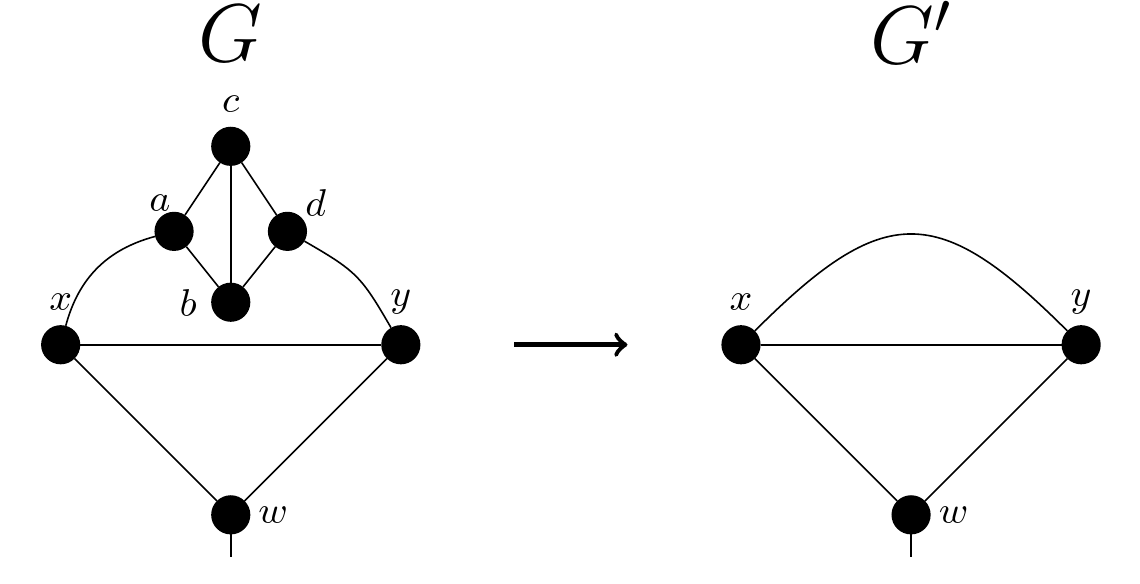}
  \caption{~}
  \label{fig:mtrump}
\end{subfigure}
\caption{The two possible cases when the new edge $xy$ was already an edge of $G$.}
\label{fig:test}
\end{figure}
   
Suppose now that $xy \not\in E(G)$ and the new edge $xy \in G^{\prime}$ is contained in a triangle. Let $z$ be the third vertex of such a triangle. Without loss of generality, we can assume that there are no multiple edges in $G^{\prime}$ connecting $x$ and $z$. If this were the case, there must be no multiple edges connecting $y$ and $z$, and therefore it is possible to swap the roles of $x$ and $y$. Let $\bar{x}$ be the neighbour of $x$ in $G^{\prime}$ other than $y$ and $z$. We must have $\bar{x}z \in E(G)$ to forbid the presence of an induced claw with centre $x$ in $G$. We remark that the neighbours of $z$ are precisely $x,y$ and $\bar{x}$. If $y\bar{x} \not\in E(G)$ then in the graph $G$ there would be an induced claw with centre $y$, which is a contradiction. Hence, $y\bar{x} \in E(G)$, which would imply that $G^{\prime} \cong K_4$, another contradiction as $G$ would have exactly eight vertices and $G$ would be a ring of diamonds with two diamonds while we are assuming that $k$ is odd. In conclusion, this means that the new edge $xy$ is contained in no triangles of $G^{\prime}$. Specifically, it follows directly that the new edge $xy$ is not contained in any diamonds of $G^{\prime}$.\\
Consequently, the number of diamonds in $G^{\prime}$ is $k-1$, which is an even integer. Let $(B^{\prime},W^{\prime})$ be a desired bisection in $G^{\prime}$, which exists by Theorem \ref{keven}. As above:

\begin{equation*}
\begin{aligned}
    \varepsilon(B^{\prime},W^{\prime}) = \frac{|V(G^{\prime})|-(k-1)-2p}{3}&=\frac{(|V(G)|-4)-(k-1)-2p}{3}\\
    &=\frac{|V(G)|-k-2p}{3}-1.
\end{aligned}
\end{equation*}    

\begin{figure}[h!]
    \centering
    \includegraphics[width=.6\textwidth]{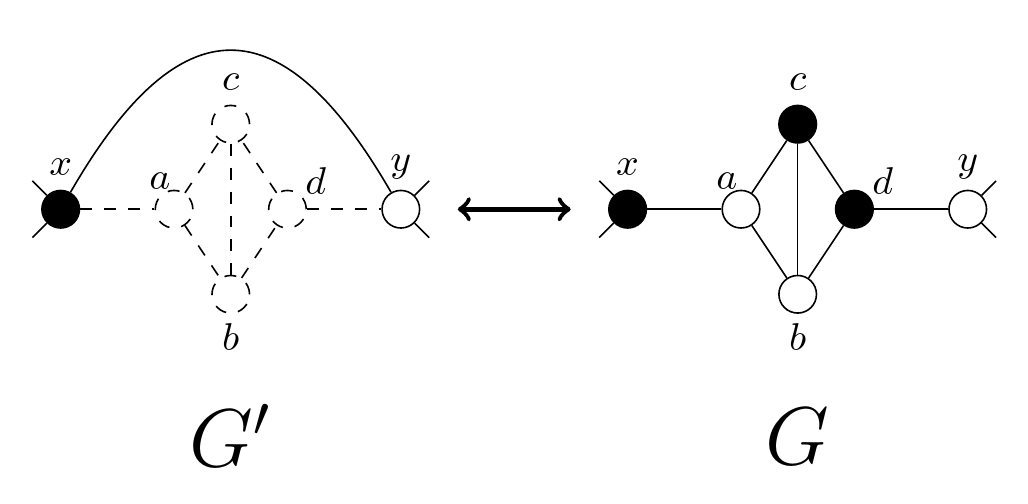} 
    \caption{The $2$-bisections $(B^{\prime},W^{\prime})$ of $G^{\prime}$ and $(B,W)$ of $G$, when the number of diamonds $k$ is odd. }
    \label{odddm}
\end{figure}
We have shown that the new edge $xy \in E(G^{\prime})$ is not contained in any diamonds, or it is a multiple edge. Conditions \textbf{(DB2)} and \textbf{(DB4)} imply that $xy$ is not monochromatic. Without loss of generality, we may assume $x\in B^{\prime}$ and $y\in W^{\prime}$. Let $\bar{B}=B^{\prime} \cup \left\{c,d\right\}$ and $\bar{W}=W^{\prime} \cup \left\{a,b\right\}$, see Figure \ref{odddm}. Therefore,
\begin{equation*}
\varepsilon(\bar{B},\bar{W})=\varepsilon(B^{\prime},W^{\prime})+2 = \frac{|V(G)|-k-2p}{3}+1.
\end{equation*}
\end{proof}

\section*{Acknowledgements}
The author would like to thank Kenta Ozeki for useful discussions about the topic. Moreover, we gratefully acknowledge the anonymous referees for their valuable comments and suggestions which significantly improved the quality of this manuscript.

\end{document}